\let\cref\Cref
\Crefname{enumi}{}{}
\theoremstyle{plain}
\newtheorem{theorem}{Theorem}[section]
\theoremstyle{definition}
\newtheorem{definition}[theorem]{Definition}
\theoremstyle{plain}
\newtheorem{lemma}[theorem]{Lemma}
\newtheorem{question}[theorem]{Question}
\theoremstyle{definition}
\newtheorem{example}[theorem]{Example}
\newenvironment{Example}{\begin{example}\rm}{\end{example}}
\theoremstyle{plain}
\newtheorem{proposition}[theorem]{Proposition}
\theoremstyle{plain}
\theoremstyle{remark}
\newtheorem{remark}[theorem]{Remark}
\newcommand{\qua}{\hskip 0.4em \ignorespaces}
\def\arxiv#1{\relax\ifhmode\unskip\qua\fi
\href{http://arxiv.org/abs/#1}%
{\tt arXiv:\penalty -100\unskip#1}}
\def\MR#1{\relax\ifhmode\unskip\qua\fi
\href{https://mathscinet.ams.org/mathscinet-getitem?mr=#1}{\tt MR#1}}
\def\ZB#1{\relax\ifhmode\unskip\qua\fi
\href{https://zbmath.org/?q=an:#1}{\tt Zbl\:#1}}
\def\xox#1{\csname xx#1\endcsname}
\renewenvironment{thebibliography}[1]{
  \begin{oldthebibliography}{#1}\small
    \setlength{\itemsep}{.5ex}
    \setlength{\parskip}{0em}
}
{
  \end{oldthebibliography}
}
  \def\unskip{}%
  \def\\{}%
  \def\texttt#1{<#1>}%
\let\stdthebibliography\thebibliography
\let\stdendthebibliography\endthebibliography
\newcommand{\myemail}[1]{\href{mailto:#1}{#1}}
\title{Non-fibered strongly quasipositive links and tightness}
\address{University of Glasgow, 132 University Pl, Glasgow G12 8TA, UK\medskip}
\author{Isacco Nonino}
\email{\myemail{2754452n@student.gla.ac.uk}}
\urladdr{\url{https://sites.google.com/view/isaccononinomath/home-page}}
\address{Section de mathématiques, Université de Genève, Suisse\medskip}
\author{Miguel {Orbegozo Rodriguez}}
\email{\myemail{miguel.orbegozorodriguez@gmail.com}}
\urladdr{\url{https://sites.google.com/view/miguel-orbegozo-rodriguez/home}}
\begin{document}
\begin{abstract}
    It is well known that for fibered links in $\mathbb{S}^3$ being strongly quasipositive and supporting a tight contact structure are equivalent notions \cite{Hedden}. In this note we analyze the relation between these two properties for non fibered links. A non fibered link (together with an incompressible Seifert surface) induces a natural partial open book \cite{Murasugisums}. We prove that strongly quasipositive links induce tight contact structures. We also show that, in contrast to the fibered case, the converse is not true, giving examples of links that are not strongly quasipositive but support tight contact structures.
\end{abstract}
\maketitle

\section{Introduction}
It is known by work of Hedden \cite{Hedden}, that for fibered links in $\mathbb{S}^3$, being strongly quasipositive and supporting a tight contact structure are equivalent notions. A fibered link $L$ induces an open book decomposition (of $\mathbb{S}^3)$, which in turn gives a contact structure. If we drop the fiberedness condition, the link does not induce an open book anymore.  However, given an incompressible Seifert surface for $L$, it might still admit \emph{product disks} (in the sense of Gabai), see \cite{Murasugisums} for the construction that is used. This allows us to define a partial open book in terms of these disks.
Partial open books, introduced by Honda, Kazez, and Mati\'{c} \cite{HKMPartial}, induce a contact manifold with convex boundary. In the case of a Seifert surface in $\mathbb{S}^3$ and the partial open book given by product disks, the contact manifold embeds in $\mathbb{S}^3$ smoothly, although perhaps not with a contact embedding. We show, in a similar way to Hedden's result, that if a link is strongly quasipositive its associated partial open book supports a tight contact structure.

Perhaps surprisingly, the converse is not true in general. We show that the knot $6_1$, which is not strongly quasipositive (it is not even quasipositive), has an associated partial open book that supports a tight contact structure. Note that this knot is \emph{nearly fibered} in the sense of \cite{Nearlyfibered} and \emph{almost fibered} in the sense of \cite{Fabiola-almost-fibered}, thus dropping the fiberedness condition ever so slightly produces a counterexample.
Building on this example, we give an infinite family of links with this properties. However, we also show that each of the surfaces we construct is (abstractly) diffeomorphic to a Seifert surface of a strongly quasipositive knot with the same product disks. 
\subsection{Structure of the paper.} 
In \Cref{sec:Prelims} we give some background on strongly quasipositive knots, partial open books, and the contact invariant for manifolds with convex boundary. Then in \Cref{sec:Results} we prove the results and end with some questions.
\subsection{Acknowledgments}
This note is the result of a research visit of the second author at the University of Glasgow.
The first author is supported by an EPSRC scholarship.
\section{Preliminaries} \label{sec:Prelims}

\subsection{Fiberedness and strong quasipositivity}
\begin{definition}[Fibered link]\label{definition:fibered_knot} 
A \emph{fibered link} in $\mathbb{S}^3$ is a link $L$ whose complement $\mathbb{S}^3 \setminus K$ fibers over the circle, i.e. there exists a fibration: 
$$  F \to \mathbb{S}^3 \setminus L \to \mathbb{S}^1, $$ where $F$ is a minimal genus Seifert surface for the link $L$.
\end{definition}

Fibered links have a natural open book $(S,h)$ associated to them, where $S$ is the fiber, and $h$ is the return map of the fibration. By the \emph{Giroux correspondence} \cite{Giroux}, this induces a contact structure on $\mathbb{S}^3$. Thus we can talk about contact properties of a fibered link when we talk about the properties of the associated contact structure. 

\begin{remark}
    Note that the minimal genus Seifert surface of a fibered link is unique up to isotopy, but this is not true in general if we drop the fiberedness condition .
\end{remark}
If a knot $K$ is non-fibered, the first thing to understand is how "far" $K$ is from being fibered. There are essentially two notions that measure the distance from fiberedness. One is given in terms of Heegaard Floer homology \cite{Baldwin-Sivek}, the other in terms of \emph{handle number} of a circular Heegaard splitting of the knot exterior $E(K)$ \cite{Fabiola-almost-fibered}. They hence produce two families of knots that are close to being fibered. 
\begin{definition}[Nearly fibered knot and almost fibered knot]\label{definition:nearly_fibered_knot}
    
    A knot $K$ is called \emph{nearly fibered} if the top grading in Heegaard Floer homology of $K$ is 2. 
    A knot $K$ is called \emph{almost fibered} if it is a  non-fibered knot K that possesses a circular Heegaard splitting $(E(K), F, S)$ such that:
    \begin{itemize}
        \item $F$ and $S$ are connected Seifert surfaces for $K$, 
        \item $ 1- \chi(S)$ realizes the circular width of $K$.
    \end{itemize}
\end{definition}
We will not really use the full theory of circular Heegard splittings in this paper, as it is beyond the scope of our work. Roughly  speaking, an almost fibered knot as per \cite{Fabiola-almost-fibered} has handle number 1, meaning that the circular Heegaard splitting has exactly one 1-handle. Fibered knots have circular Heegaard splitting with $F=S$ and no additional 1-handles are required.

\begin{remark}\label{almost_fibered_neq_nearly_fibered}
    We point out that the two families introduced above are distinct. It is known that all nearly fibered knots are almost fibered \cite{Nearlyfibered}, but not all almost fibered knots are nearly fibered \cite{Nearlyfibered}.
\end{remark}

The concepts in \Cref{definition:nearly_fibered_knot} are meant to give a measure of how far knots are from being fibered. We will not pursue this further, except to mention (see \Cref{remark:Stevedorealmostfibered}) that for our purpose there is simply a dichotomy rather than a hierarchy, i.e. Hedden's result is true for fibered knots, but not for non-fibered knots.

\begin{definition}[Quasipositive surface and strongly quasipositive links]\label{definition:quasipositive_surface}
    A surface with boundary $S \subset \mathbb{S}^3$ is \emph{quasipositive} if it is a subsurface of the fiber surface of a positive torus knot. A link that has a quasipositive Seifert surface is \emph{strongly quasipositive}.
\end{definition}
\begin{definition}[Murasugi sum]\label{definition:murasugi_sum}
    
    Let $\Sigma_1$ and $\Sigma_2$ be surfaces in $3$-manifolds $M_1$ and~$M_2$. 
    Let $n\geq1$ be an integer and let $P$ be the regular $2n$-gon whose
sides are circularly labeled $s_1, \dots, s_{2n}$. 
    For~$i = 1,2$, let $f_i$ be an embedding of $P$ into $\Sigma_i$ such that, for~$j = 1, \dots , n$, $f_1(s_{2j-1}) \subset \partial \Sigma_1$, $f_2(s_{2j}) \subset \partial \Sigma_2$, and $f_1(s_{2j})\subset \Sigma_1$ and $f_2(s_{2j-1})\subset \Sigma_2$ are properly embedded arcs in the respective surfaces.
    We identify $P$ with its image in~$\Sigma_1$,~$\Sigma_2$, and~$\Sigma$, and often suppress $f_i$ in the notation.
We assume that $\Sigma_i \cap B^3_i = P \subset \partial B^3_i \subset M_i$.
    Here, we arrange that a positive normal to $P\subset M_1$ points into $B_1^3$ and a positive normal to $P\subset M_2$ points out of~$B_2^3$.
    Furthermore, we arrange that $F$ exactly yields the chosen identification of~$P\subset M_i$, that is, $f_2\circ f_1^{-1}$ is the restriction of $F$ to the polygons.

    With this setup, we take $M\coloneqq M_1\# M_2$ and
    $\Sigma$ is the union of the copies of $\Sigma_1$ and $\Sigma_2$ in $M$ given by the canonical inclusions of $M_i^\circ$ into~$M$. In particular, after identifying  $\Sigma_1$ with their copies in~$M$, we have $\Sigma=\Sigma_1\cup \Sigma_2$ and $\Sigma_1\cap\Sigma_2=P$. See \Cref{figure:Murasugisum} for an illustration.

    The pair $(M, \Sigma)$ is called the \emph{Murasugi sum of $(M_1, \Sigma_1)$ and $(M_2, \Sigma_2)$ along the summing region $P$}. For simplicity, we also say that $\Sigma$ is the Murasugi sum of $\Sigma_1$ and~$\Sigma_2$, suppressing the $3$-manifolds in the notation. As we work with $M_1 = M_2 = \mathbb{S}^3$ throughout, this does not cause any confusion.
\end{definition}

\begin{figure}[htb] 
\centering
\begin{tikzpicture}
\node[anchor=south west,inner sep=0] at (0,0)
{\includegraphics[width=8cm]{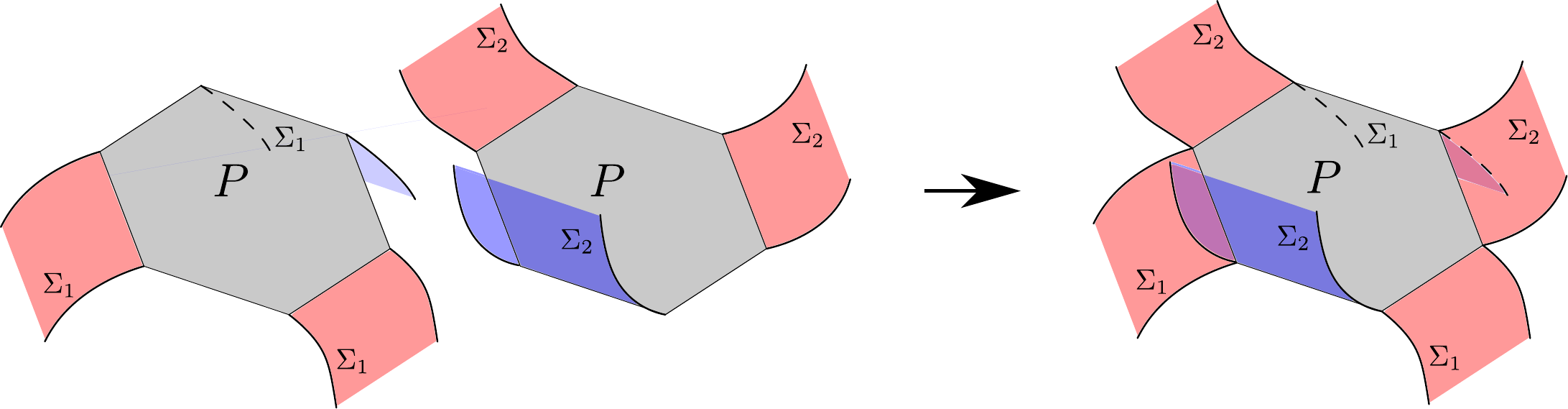}};
\end{tikzpicture}
\caption{A Murasugi sum of two surfaces $\Sigma_1$ and $\Sigma_2$ along $P$. Figure taken from \cite{Murasugisums}.}
\label{figure:Murasugisum}
\end{figure} 

  A result of Rudolph \cite{Rudolph} asserts that the result of a plumbing (more generally, a Murasugi sum) is strongly quasipositive if and only if the summands are. In particular, positive Hopf plumbing and deplumbing preserves strong quasipositivity. 
 
\subsection{Partial open books and contact manifolds with convex boundary}

\begin{definition}[Partial open book decompositions]\label{definition:partial_open_book_decomposition}
    A \emph{partial open book} is a triple $(S, P, h)$, where $S$ is a compact surface with boundary, $P$ is a subsurface of $S$ such that $S$ can be built from $S\setminus P$ by adding a finite number of $1$--handles, and $h: P \rightarrow S$ is an embedding.
\end{definition}

 By work of Honda, Kazez, and Mati\'{c} \cite{HKMPartial}, this determines a contact manifold with convex boundary. The manifold $(M,\xi)$ associated to $(S,P,h)$ is constructed as follows.
 We first build two handlebodies $H_1$ and $H_2$ from $S$ and $P$.
 $$H_1= \frac{S_2\times [-1,0]}{(x,t) \sim (x,s) \text{ for } x \in \partial S \text { and } t \in [-1,0]}$$
 $$H_2= \frac{P\times [0,1]}{(x,t) \sim (x,s) \text{ for } x \in \partial P \cap \partial S \text { and } t \in [0,1]}$$
 Then, we construct $M$ as $H_1 \cup_{h} H_2$, where the gluing via $h$ is given by $(x,0) \sim (x,0)$ and $(x,1) \sim (h(x),-1)$.
 The contact structure $\xi$ is defined to be the unique (up to isotopy) contact structure so that the restriction to both handlebodies is tight and the dividing curves on the convex boundary of $M$ are given by $\partial S \times \left(-\frac{1}{2}\right)$ and $\partial P \times \left(\frac{1}{2}\right)$.

\begin{remark} \label{remark:EmptyorfullP}
    If $P = S$ then we recover the usual notion of an \emph{abstract open book}, which gives rise to a closed $3$--manifold. If $P$ is empty, so is $H_2$, and we are left with the unique tight contact structure on $H_1$. We will thus always implicitly restrict ourselves to cases where $P \neq \varnothing$.
\end{remark}

 There is a relative analogue of Giroux correspondence \cite{HKMPartial}, so two partial open books support the same contact manifold if and only if they have a common positive stabilization (recall that a positive stabilization corresponds to plumbing a positive Hopf band).

\begin{definition}[Right--veering]\label{definition:right--veering}
     Let $(S, \varphi)$ be an open book decomposition, and let $\alpha$ be an oriented properly
embedded arc with starting point $x$. We will adopt the convention that its image $\varphi(\alpha)$ is given the opposite orientation to $\alpha$. We then say that $\alpha$ is right--veering (with respect to $\varphi$) if $\varphi(\alpha)$ is isotopic
to $\alpha$ or, after isotoping $\alpha$ and $\varphi(\alpha)$ so that they intersect transversely with the fewest possible
number of intersections, $(\alpha'
(0), \varphi(\alpha)'
(1))$ define the orientation of $S$ at $x$. In this latter case we will
say that $\alpha$ is strictly right--veering. If $\alpha$ is not right--veering we say it is left-veering.
\end{definition}

If every arc in an open book is right--veering, we say that the open book itself is \emph{right--veering} (and analogously for left--veering). Similarly, if we have a partial open book $(S, P, h)$, if every arc $a$ in $P$ is right--veering (resp. left-veering) with respect to $h$ we say that the partial open book itself is \emph{right--veering} (resp. \emph{left--veering}).

 As in the closed case, Honda, Kazez, and Mati\'{c} prove that a contact manifold with convex boundary is overtwisted if an only if it has a supporting partial open book that is not right--veering \cite{HKMPartial}.

Moreover, the definition of the Heegaard Floer contact class (originally due to Oszv\'{a}th and Szab\'{o} \cite{OS} and reinterpreted by Honda, Kazez and Mati\'{c} \cite{HKMclosed}) can be modified to a class in sutured Floer homology \cite{HKMPartial}. As in the closed case, this class is zero for overtwisted contact structures. Therefore non--vanishing of this class is enough to guarantee tightness.

\section{Constructions and results} \label{sec:Results}

Recall from \Cref{sec:Prelims} that fibered links have a natural open book associated to them.  If the link is not fibered, we do not have such an open book. However, using Gabai's theory of product disks, we can define a partial open book, and consequently a contact structure on a manifold with convex boundary. Note that uniqueness of minimal genus Seifert surfaces is not guaranteed once we drop the assumption of the link being fibered. Hence we will need to declare our choice of surface $S$, not just the link $K$.

\begin{definition}[Partial Open Book associated to a link $K$ with an incompressible Seifert surface]\label{definition:partial_open_book_decomposition_of_sqp}
    Let $S$ be an incompressible  Seifert surface for $K$ in $\mathbb{S}^3$. Let $S'$ be the union of all arcs that have associated product disks, and for each arc $a$ with a product disk $D_a$, define $h(a)$ to be the arc $a'$ such that $\partial D_a = a \cup a'$. Take $\mathcal{B}$ to be a basis of arcs of $S'$ such that the endpoints of every arc in $\mathcal{B}$ lie on $\partial S$. Let $P$ be a neighbourhood of $\mathcal{B}$. Then $(S, P, h)$ is the \emph{partial open book associated to $S$}.
\end{definition}
This construction, while natural, does not immediately give a well-defined notion. In the next lemma we show that it does.
\begin{lemma}\label{lemma:well_defined}
    Let $K$ be a link, $S$ an incompressible Seifert surface for $K$. Then $(S,P,h)$ is a well defined partial open-book associated to the pair $(S,K)$.
\end{lemma}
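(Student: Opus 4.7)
The plan is to verify two things. First, that for any admissible choice of basis $\mathcal{B}$ of arcs and rectangular neighborhood $P$ appearing in \Cref{definition:partial_open_book_decomposition_of_sqp}, the triple $(S,P,h)$ really is a partial open book in the sense of \Cref{definition:partial_open_book_decomposition}. Second, that different admissible choices produce partial open books that agree up to positive stabilization, so that by the relative Giroux correspondence recalled in \Cref{sec:Prelims} the associated contact manifold with convex boundary is independent of the choices.

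For the first step, since every arc in $\mathcal{B}$ has its endpoints on $\partial S$, a regular neighborhood $P\subset S$ of $\mathcal{B}$ is a disjoint union of rectangles, each meeting $\partial S$ in two subarcs. Hence $S$ is recovered from $S\setminus P$ by attaching $|\mathcal{B}|$ one-handles, as required. On the basis arcs, the assignment $a\mapsto h(a)$ is unambiguous because the product disk $D_a$ is unique up to isotopy rel $\partial$, so the arc $a'$ with $\partial D_a = a\cup a'$ is determined up to isotopy on $S$. I would extend $h$ to an embedding of the rectangular neighborhood of $a$ using a bicollar of $D_a$ in $\mathbb{S}^3\setminus S$; this extension is unique up to isotopy rel the endpoints on $\partial S$, which is exactly the ambiguity built into a partial open book in the first place.

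For the second step, I would argue that any two admissible bases for $S'$ are related by a finite sequence of arc slides supported inside $S'$, and that each such slide, when implemented on the partial open book side, is the composition of a positive stabilization and a destabilization. By the relative Giroux correspondence, positive stabilization preserves the underlying contact manifold with convex boundary, so both the partial open book up to Giroux equivalence and the resulting $(M,\xi)$ are intrinsic to the pair $(K,S)$.

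The main obstacle is a clean formalization of the arc-slide/stabilization correspondence. A shortcut I would keep in mind is to note that the cut-and-paste $H_1\cup_h H_2$ depends only on the decomposition of $\mathbb{S}^3\setminus S$ into the product region (foliated by the product disks corresponding to the arcs in $\mathcal{B}$) and its closure complement: $H_2$ is identified with a regular neighborhood of this product region in $\mathbb{S}^3\setminus S$, and $H_1$ with its complement. Since this decomposition is canonically attached to $(K,S)$, one can read off independence from $\mathcal{B}$ directly, without ever invoking arc slides.
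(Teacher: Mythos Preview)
Your outline handles the handle-attachment condition and the independence-of-basis issue, but it skips what is actually the main technical point of the paper's proof: verifying that $h\colon P\to S$ is an \emph{embedding}. You define $h$ on each rectangular neighborhood separately via a bicollar of its product disk, but you never check that the images $h(a)$ and $h(b)$ of two distinct basis arcs are disjoint in $S$. A priori the product disks $D_a$ and $D_b$ may intersect in the complement of $S$, and their far boundary arcs $h(a),h(b)\subset S$ may cross. The paper deals with this by an innermost-disk argument to remove circle components of $D_a\cap D_b$, and then an innermost-arc argument: an intersection point of $h(a)$ with $h(b)$ gives an arc of $D_a\cap D_b$, the two bigons it cuts off glue to a disk with boundary on $S$, and incompressibility of $S$ forces that boundary to bound a bigon in $S$, so the intersection can be isotoped away. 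Without this step you do not yet have a partial open book.

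By contrast, your discussion of basis-independence is more elaborate than the paper's; the paper simply asserts independence of the contact manifold ``analogously to the fibered case'' and does not spell out the arc-slide/stabilization mechanism. Your shortcut via the canonical product region is closer in spirit to what the paper has in mind, but note that it too presupposes the embedding property you have not yet established.
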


\begin{proof}
     
First of all, for each arc in $S'$, incompressibility implies that $h(a)$ is well-defined up to isotopy. Moreover, the product disks of a basis of $S'$ uniquely determine the product disks of all arcs in $S'$ because their complement in $S'$ is a disk. 

We now show that $h$ is an embedding. It is enough to show that for $a \neq b \in P$, $h(a)$ and $h(b)$ are disjoint. Let $D_a$ be a product disk witnessing $a \mapsto h(a)$, and $D_b$ a product disk witnessing $b \mapsto h(b)$. Note that generically the intersection between $D_a$ and $D_b$ will be a $1$-manifold, i.e. a collection of circles and arcs. In fact, we can assume the intersection does not contain circles. Indeed, if it does, take an innermost intersection circle $C$ in $D_a$, that is, one that bounds a disk $D_C$ in $D_a$ with no further intersections with $D_b$. Then, replace $D_b$ with a disk $D'_{b}$ resulting from gluing the part of  $D_b$ lying outside of $C$ to (a small pushoff of) $D_C$. This removes the intersection circle $C$.

Now assume that there is an intersection point $x$ between $h(a)$ and $h(b)$. This must come from an intersection arc between $D_a$ and $D_b$, and crucially the other endpoint of this arc lies also in $h(a)$ and $h(b)$ since $a$ and $b$ are disjoint by construction. Take once again an innermost such arc $\gamma$ in $D_a$, which bounds a bigon in $D_a$ together with a segment $\delta_a$ of $h(a)$, which has no intersections with $D_b$. The arc $\gamma$ also bounds a bigon in $D_b$ together with a segment $\delta_b$ of $h(b)$. Gluing these two bigons together along $\gamma$ gives a disk with boundary $\delta_a \cup \delta_b$. This disk is embedded since the bigon in $D_a$ had no intersections with $D_b$. Then, incompressibility of $S$ implies that there is a disk in $S$ with boundary $\delta_a \cup \delta_b$. This disk is a bigon which shows that $x$ can be removed, and we are done.

Therefore, analogously to the fibered case, the contact manifold we obtain is independent of the choice of basis, and thus independent of the choice of $P$. Finally, since $\mathcal{B}$ is a collection of pairwise disjoint arcs, so we can obtain $S$ from $S \setminus P$ by attaching $1$--handles.
\end{proof}

Note that, since the associated contact manifold $(M, \xi)$ is constructed using $h$, which is defined in terms of the product disks in $\mathbb{S}^3$, $M$ embeds in $\mathbb{S}^3$ (this embedding is smooth, but not necessarily contact).

We now prove that the partial open book associated to (a quasipositive surface of) a strongly quasipositive link supports a tight contact structure, using the following result.

\begin{lemma} {\cite[Proposition 5.1]{Murasugisums}} \label{lemma:RV}
    If $K$ is a strongly quasipositive link, the partial open book associated to a quasipositive Seifert surface is right--veering.
\end{lemma}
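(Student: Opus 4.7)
The plan is to exploit the embedding of the quasipositive surface $S$ into the fiber surface $F$ of a positive torus knot $T$, and to transfer right-veering from the ambient open book $(F,\phi)$ to the partial open book $(S,P,h)$. The open book $(F,\phi)$ has right-veering monodromy because $\phi$ is a product of positive Dehn twists (as $T$ is a positive braid closure), and positive Dehn twists are right-veering with compositions preserving that property, by the basic results in \cite{HKMPartial}.

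The first step is to relate the partial monodromy $h$ to a restriction of $\phi$. For an arc $a\in\mathcal{B}$ with product disk $D_a\subset\mathbb{S}^3\setminus N(S)$, the fibered complement $\mathbb{S}^3\setminus N(F)\cong F\times[0,1]/\!\sim$ sits inside $\mathbb{S}^3\setminus N(S)$ as a sub-sutured-manifold, since $S\subset F$. An innermost-disk argument, using incompressibility of $F$ and the product structure on its complement, should allow one to isotope $D_a$ rel boundary into this product region. Once $D_a$ is vertical in the product coordinates, it is (up to isotopy) the swept disk $a\times[0,1]$, whose second boundary component is $\phi(a)$; this forces $\phi(a)\subset S$ and gives the identification $h(a)=\phi(a)$ up to isotopy in $S$.

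The second step is to deduce right-veering of $h$. Right-veering is a local tangent-vector condition at the endpoints of $a$, which lie on $\partial S\subset F^{\circ}$. Because $\phi$ is a product of positive Dehn twists, it acts as a ``positive rotation'' at every point of $F$ in the support of a Dehn twist (not merely on $\partial F$), so $\phi(a)$ turns to the right of $a$ at each of its endpoints. Since $S\subset F$ inherits the orientation of $F$, the tangent-vector comparison is intrinsic to the surface and does not distinguish between evaluating the condition in $F$ or in $S$. Hence $h(a)=\phi(a)$ is right-veering relative to $a$ in the partial open book $(S,P,h)$.

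The main obstacle is the identification $h(a)=\phi(a)$ in the first step. The subtlety is that $D_a$ may intersect $F\setminus S$ in arcs and circles (which must be removed by isotopy using incompressibility) and, more delicately, that $\phi$ generally moves points of $\partial S$, so the ``vertical'' boundary arcs of $D_a$ on $\partial N(L)$ must be carefully compared with the monodromy displacement of $\partial a$. An alternative route that avoids these issues is to use Rudolph's characterization of quasipositive surfaces as positively braided surfaces with positive half-twisted bands, and to argue locally at each band: a product disk crossing a positive band realises a positive half-twist in $h$, which is manifestly right-veering, and composing the local contributions over all bands crossed by $a$ preserves right-veering.
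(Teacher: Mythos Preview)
The paper does not give its own proof of this lemma; it simply cites \cite[Proposition~5.1]{Murasugisums}. So there is no in-paper argument to compare against, and the question is only whether your sketch stands on its own.

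It does not, for two reasons. First, the identification $h(a)=\phi(a)$ is not established. You correctly flag that $\phi$ need not fix $\partial S$ (since $\partial S$ lies in the interior of $F$), so $\phi(a)$ generally does not even have the same endpoints as $a$; the product disk $D_a$ therefore cannot be the vertical disk $a\times[0,1]$ in the fibration coordinates without first sliding its boundary along $\partial N(L)$, and you do not explain how to control the effect of that slide on the right--veering comparison. Second, and more seriously, your step two rests on the assertion that a product of positive Dehn twists ``acts as a positive rotation at every point of $F$ in the support of a Dehn twist''. Right--veering in the sense of \cite{HKMPartial} is a statement about tangent directions at points of $\partial F$, where $\phi$ is the identity; it is not a pointwise rotation property in the interior, and a Dehn twist restricted to its supporting annulus is a shear, not a rotation. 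Deducing that $\phi(a)$ lies to the right of $a$ at endpoints on $\partial S\subset F^\circ$ is a genuinely new statement that does not follow from right--veering of $(F,\phi)$, and you give no argument for it. Your alternative route via Rudolph's positively braided band presentation is closer to something workable and likely closer to what \cite{Murasugisums} actually does, but ``composing local contributions over all bands crossed by $a$ preserves right--veering'' is again an assertion, not an argument: the local pictures at different bands are not compositions of self-maps of a fixed surface, so the standard ``composition of right--veering maps is right--veering'' theorem does not apply directly.
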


\begin{theorem}
    Strongly quasipositive links induce partial open books supporting tight contact structures.
\end{theorem}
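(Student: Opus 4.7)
My plan is to combine \Cref{lemma:RV} with the non-vanishing of the sutured contact class. By \Cref{lemma:RV}, the partial open book $(S, P, h)$ associated to a quasipositive Seifert surface is right-veering. It remains to promote this to tightness; the strategy is to show that the Honda-Kazez-Matić contact class $c(\xi) \in \mathrm{SFH}(-M)$ is non-zero, since this class vanishes on overtwisted contact structures as recalled in \Cref{sec:Prelims}.

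To compute $c(\xi)$, I would use the standard Heegaard diagram for a partial open book: fix a basis of arcs $\{a_1, \ldots, a_k\}$ for $P$ with endpoints on $\partial S$, take $\alpha$-curves from the $a_i$ and $\beta$-curves from the images $h(a_i)$, and identify the contact class with the canonical $k$-tuple of intersection points sitting at the initial endpoints of the basis arcs. The right-veering property on each $a_i$ ensures that this generator is ``topmost'' near each endpoint, so no positive holomorphic disk can emanate from it and the tuple is a cycle in the sutured Floer chain complex.

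The main obstacle is the cycle claim. In the closed setting, right-veering is only necessary (not sufficient) for non-vanishing of the contact class, so one must exploit that the combinatorics of the partial-open-book setting is governed by arcs meeting $\partial S$ rather than closed curves, and further that (by strong quasipositivity) $S$ embeds in the fiber surface of a positive torus knot whose fibered open book has non-vanishing contact invariant by Hedden. Transporting non-vanishing through this subsurface inclusion is what promotes \Cref{lemma:RV} to the desired tightness statement, paralleling Hedden's argument in the fibered case but carried out at the level of the sutured HKM contact invariant.
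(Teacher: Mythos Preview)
Your approach differs substantially from the paper's, and as written it has a genuine gap.

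The paper never computes the contact class. Instead it argues by contradiction from the Honda--Kazez--Mati\'{c} overtwistedness criterion: if $(S,P,h)$ supported an overtwisted structure, then some other supporting partial open book $(S',P',h')$ would fail to be right-veering. By the relative Giroux correspondence the two share a common positive stabilisation $(S'',P'',h'')$; since positive (de)stabilisation is positive Hopf (de)plumbing and this preserves quasipositivity by Rudolph, $S''$ and hence $S'$ are quasipositive surfaces sitting in $\mathbb{S}^3$. But then \Cref{lemma:RV} forces $(S',P',h')$ to be right-veering after all, a contradiction. No Floer-theoretic computation is needed.

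In your route the gap is exactly the step you flag as the ``main obstacle''. Right-veering at each basis arc shows at most that the contact generator is a \emph{cycle}; what you need is that it is not a \emph{boundary}, and right-veering of a single supporting open book does not give this, as you acknowledge. Your proposed remedy---transporting non-vanishing through the subsurface inclusion $S\hookrightarrow F$ into the torus-knot fibre---is not carried out, and it is not clear it can be made to work directly: the paper explicitly notes that the smooth embedding of $(M,\xi)$ into $\mathbb{S}^3$ is \emph{not necessarily a contact embedding}, so you cannot simply invoke an HKM gluing map into $(\mathbb{S}^3,\xi_{\mathrm{std}})$ without first showing that the product-disk monodromy on $S$ is compatible with (a restriction of) the torus-knot monodromy and that the complementary piece carries a suitable contact structure. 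That is a substantial additional argument, whereas the paper's Giroux-correspondence-plus-Rudolph route sidesteps the contact invariant entirely.
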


\begin{proof}
Let $K$ be a strongly quasipositive link. Assume the open book $(S, P, h)$ associated to a quasipositive Seifert surface is overtwisted. By \cite{HKMPartial} there exists a partial open book $(S',P',h')$ for this contact manifold that is not right--veering. By the relative Giroux correspondence, they have a common stabilization $(S'',P'',h'')$. Recall that stabilization corresponds to plumbing a positive Hopf band, which preserves quasipositivity \cite{Rudolph}. Since $K$ is strongly quasipositive, $S''$ is quasipositive. But destabilization also preserves quasipositivity, so $S'$ is quasipositive, which contradicts \Cref{lemma:RV}. This concludes the proof. Note that, since $K$ is a link in $S^3$, all three open books (as well as the associated contact manifold $(M, \xi)$) embed in $S^3$, so we can indeed talk about quasipositivity of surfaces.
\end{proof}

We now show that, unlike in the fibered case, the converse is not true.

\begin{proposition} \label{prop:counterexample}
    There exist a partial open book supporting a tight contact structure whose binding is not strongly quasipositive. 
\end{proposition}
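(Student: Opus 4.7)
The plan is to take, as hinted in the introduction, the knot $K = 6_1$ equipped with its unique minimal-genus Seifert surface $S$, a once-punctured torus. The first step is to confirm that $6_1$ is not strongly quasipositive --- in fact it is not even quasipositive; this is a known fact about $6_1$ (recorded for instance in the KnotInfo database and deducible from standard obstructions such as the Bennequin inequality combined with $6_1$ being slice). Since strong quasipositivity implies quasipositivity, this settles the ``not strongly quasipositive'' half of the statement.

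Next, I would construct the partial open book $(S, P, h)$ associated to $S$ explicitly. From a standard diagram of $6_1$ one draws $S$, enumerates the properly embedded arcs on $S$ that admit product disks in $\mathbb{S}^3 \setminus S$, selects a basis $\mathcal{B}$ of such arcs with endpoints on $\partial S$, and reads off $h$ on a neighborhood $P$ of $\mathcal{B}$. Since $6_1$ is almost fibered with handle number one in the sense of \cite{Fabiola-almost-fibered}, the arc subsurface $S'$ is $S$ with the neighborhood of a single $1$-handle removed; in particular $P$ has a simple combinatorial description and the monodromy $h$ can be written down by hand.

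The main obstacle is proving tightness of the resulting contact structure $\xi$ on $(M, \Gamma)$. My strategy is to compute the Honda--Kazez--Mati\'{c} contact class $c(\xi) \in \mathit{SFH}(-M, -\Gamma)$ directly from the partial open book and show it is non-vanishing, which by the results recalled at the end of \Cref{sec:Prelims} implies tightness. Concretely, $c(\xi)$ is represented by the canonical intersection tuple of the basis arcs $a_i$ with their right pushoffs $b_i$ on the associated sutured Heegaard diagram; to show it is a non-trivial cycle, I would use that $6_1$ is nearly fibered \cite{Nearlyfibered} to obtain an \emph{a priori} bound on the rank of the relevant sutured Floer homology, small enough to identify the contact generator by hand and verify it is not a boundary. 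As a sanity check --- a necessary but not sufficient condition for tightness --- I would also confirm directly that $h$ is right--veering on each basis arc.
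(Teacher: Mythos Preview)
Your overall strategy --- take $6_1$, build the partial open book from its genus-one Seifert surface, and prove tightness by showing the contact class is non-zero --- is exactly what the paper does. But your execution plan is more roundabout than necessary, and one step is shaky.

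The paper's key simplification is structural: by Gabai's work on pretzel knots \cite{Gabaipretzel}, the surface $S$ is the Murasugi sum of a positive Hopf band and a $(-4)$-twisted band. The latter has no product disks at all, so by \cite[Lemma~3.1]{Murasugisums} the entire product-disk set comes from the Hopf band --- a single arc $a$, with $h(a)$ meeting $a$ only at endpoints. With one basis arc and no interior intersections, there are simply no bigons that could map to the contact generator, so $c(\xi)\neq 0$ is immediate; no rank computation of $\mathit{SFH}$ is needed. (The paper also notes an even shorter alternative: deplumbing the Hopf band leaves a surface with empty $P$, hence a tight handlebody by \Cref{remark:EmptyorfullP}.)

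By contrast, your plan to invoke nearly-fiberedness to bound the rank of $\mathit{SFH}(-M,-\Gamma)$ is not clearly justified: the nearly-fibered condition constrains $\widehat{HFK}(K)$ in the top Alexander grading, i.e.\ the sutured Floer homology of the knot complement with two meridional sutures, whereas $(M,\Gamma)$ is a different sutured manifold built from the partial open book. Relating the two would be extra work that the paper avoids entirely. Likewise, ``enumerate the product disks from a standard diagram'' is precisely the step where one wants the Murasugi-sum decomposition; without it you have not explained how you know you have found all of them.
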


\begin{proof}
    Consider the Stevedore knot $6_1$, which is not strongly quasipositive. We will show that its associated partial open book supports a tight contact structure by proving that the contact invariant is nonzero. We use the genus one Seifert surface $S$ in Figure~\ref{figure:Pretzel_knot(3,3,1)}. We know by \cite{Gabaipretzel} that it is the Murasugi sum of a positive Hopf link --which is fibered-- and a band with $-4$ twists --which has no product disks. Then by \cite[Lemma 3.1]{Murasugisums}, the only product disks come from the Hopf band, which is right--veering. Now, the associated open book is $(S, P,h )$, where $P = N(a)$ and $h(a)$ is as shown ($a$ is the generator of the first homology of the Hopf link which is disjoint from the $-4$ twisted band), see Figure~\ref{figure:Pretzel_knot(3,3,1)}. Since $a$ and $h(a)$ have no interior intersections, it is clear that $c(S, P, h) \neq 0$. Indeed, a differential in Heegaard Floer homology involving a unique arc and its image will be a bigon, but if the arc is right--veering the target of this differential cannot be the contact class. This implies that the associated contact structure is tight \cite{HKMPartial}. Alternatively, note that Hopf plumbing does not change the contact structure. Since a band with $-4$ twists does not have any product disks, the contact structure is tight, see \Cref{remark:EmptyorfullP}.
\end{proof}
\begin{figure}[htb] 
\centering
\begin{tikzpicture}
\node[anchor=south west,inner sep=0] at (0,0)
{\includegraphics[width=6cm]{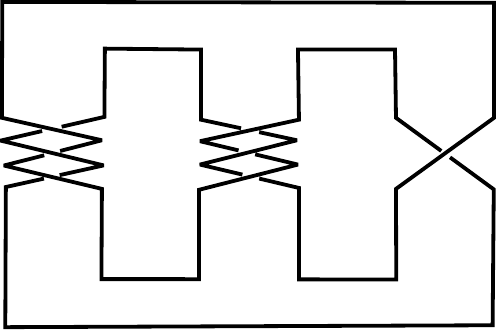}};
\end{tikzpicture}
\caption{The Pretzel Knot $(-3,3,1)$, which is the Stevedore knot $6_1$.}
\label{figure:Pretzel_knot(3,3,1)}
\end{figure} 
\begin{figure}[htb]
\centering
\label{figure:knot_on_surface}
\end{figure} 
\begin{remark} \label{remark:Stevedorealmostfibered}
    Note that the knot $6_1$ is \emph{nearly fibered} and hence \emph{almost fibered} (see Definition \ref{definition:nearly_fibered_knot}), so our counterexample is as strong a counterexample as one could hope for. In particular, dropping the fiberedness condition -even if only slightly- immediately implies that strong quasipositivity is not guaranteed by tightness of the supported (partial) open book.
\end{remark}
    Moreover, the technique of \Cref{prop:counterexample} can be used to produce infinitely many examples of links that support tight contact structures (indeed, contact structures with nonvanishing contact invariant) but are not strongly quasipositive. We illustrate this in the following examples.

\begin{Example}
    Take the knot $6_1$, and Murasugi sum it with any strongly quasipositive link. Since on the $3$--manifold level, and also the level of contact structures, Murasugi sums correspond to connected sums, the result is a tight contact manifold.
\end{Example}    
\begin{Example}
    Let $L$ be the pretzel link $(-3, n_2, \dots , n_k, 1)$, where some $n_i$ is negative, and $n_i + 1 \neq \pm 2$ for $i = 2, \dots k$. By \cite{Gabaipretzel}, this is the Murasugi sum of a positive Hopf link and $k-1$ bands, each with $n_i + 1$ twists for $k = 2, \dots k$. Then by \cite{Murasugisums}, the only product disks come from the positive Hopf link, and the argument from \Cref{prop:counterexample} applies. Moreover, since one of the coefficients is negative, the corresponding band is not strongly quasipositive, and thus by \cite{Rudolph} $L$ is not strongly quasipositive.
\end{Example}

\begin{remark}
    $6_1$ is an example of a slice knot which is not fibered and supports a tight structure. Note that nontrivial slice fibered knots can never support the tight contact structure --fibered knots supporting the tight contact structure are strongly quasipositive by Hedden \cite{Hedden}, and in this case $ 0 < g(K) = g_4(K)$.
\end{remark}

For every example above, the diffeomorphism type of the surface can be realised by a Seifert surface of a strongly quasipositive knot, such that the product disks induce the same map on the arc set. Indeed, if we replace every negative band in our examples with a positive band, the result is a strongly quasipositive link. Moreover, since the bands were not Hopf bands, again \cite{Murasugisums} shows that the only arc that admits a product disk is again $a$. Thus we ask the following question.

\begin{question} \label{Q1}
    Let $\Sigma$ be an incompressible Seifert surface whose associated partial open book induces a tight contact structure. Does the abstract partial open book also arise from a Seifert surface of a strongly quasipositive link?
\end{question}

As an avenue to answer in the negative this question, we propose investigating the following question.

\begin{question} \label{Q2}
    If $\Sigma$ is an incompressible Seifert surface of a strongly quasipositive link, does the associated partial open book have nontrivial contact invariant?
\end{question}

Note that an affirmative answer to \Cref{Q2} would imply a negative answer to \Cref{Q1}. Indeed, a neighbourhood of a Giroux torsion domain can be smoothly embedded in $S^3$, as topologically a Giroux torsion domain is just $T^2 \times I$. Given a partial open book of this contact manifold, we can take the resulting embedded surface in $S^3$. Assuming an affirmative answer to \Cref{Q2}, this cannot be a Seifert surface of a strongly quasipositive link, since it has vanishing contact invariant \cite{Girouxtorsion}. However, the associated contact structure is tight.

\bibliographystyle{myamsalpha} 
\bibliography{References}

\end{document}